\newcommand{\Md}[4]{\left(\begin{array}{cc}{#1}&{#2}\\{#3}&{#4}\end{array}\right)}
\newcommand{\R}{\mathbb{R}}
\newcommand{\rar}{\R^2\rtimes_A\R}
\newcommand{\Sol}{{\rm Sol}_3}
\newcommand{\hn}[1]{{\mathbb{H}^{#1}}}
\newcommand{\forma}[1]{\langle #1 \rangle}
\newcommand{\G}{\Gamma}
\newcommand{\wt}{\widetilde}
\newcommand{\grad}{\mbox{\rm grad}}
\newcommand{\abs}[1]{\vert #1 \vert}
\newcommand{\hh}{\mathbb{H}^2}
\providecommand{\U}[1]{\protect\rule{.1in}{.1in}}
\newtheorem{theorem}{Theorem}
\newtheorem{thm}[theorem]{Theorem}
\newtheorem{claim}[theorem]{Claim}
\newtheorem{definition}[theorem]{Definition}
\newtheorem{proposition}[theorem]{Proposition}
\theoremstyle{definition}
\newtheorem{remark}[theorem]{Remark}
\begin{document}

\title{On the non-parabolicity of $\Sol$}
\title{On the non-parabolicity of $\Sol$}
\author{L. Bonorino, G. Nunes,
A. Ramos\footnote{The third, the fourth and the sixth authors were partially supported
by CNPq/Brazil}\, ,
J. Ripoll$\vphantom{o}^*$,
L. Sauer and
M. Telichevesky$\vphantom{o}^*$}

\maketitle

\begin{abstract}
We prove that $\Sol$, the isometry group of the Minkowski plane, 
is non parabolic with respect to any left invariant metric.
\end{abstract}

\noindent {\bf Keywords:} Lie groups; parabolicity; $\Sol$.

\

\section{Introduction.}

A complete Riemannian manifold $M$ is called {\em parabolic} if any
(entire) positive
superharmonic function of $M$ is constant; if the contrary
happens then $M$ is called {\em non-parabolic}. Although recently
the more general notion
of $p-$parabolicity (see~\cite{G}) has attracted the attention of the
mathematicians, there are still some interesting open problems related to
parabolicity (and non-parabolicity) of manifolds. 
For instance, Green-Wu~\cite{GW} conjectured that 
if the sectional curvature $K$ of a Hadamard
manifold $M$ satisfies $K\leq-C/r^{2}$, $r\geq r_{0}>0,$ for some
positive constant $C,$ where $r=r(p),$ $p\in M,$ is the Riemannian distance of
$p$ to a given point of $M,$ then there exist bounded, nonconstant 
harmonic functions on $M$, which, in particular, implies non-parabolicity.

Recently, L. Priebe and R. Soares proved that if the Ricci
curvature of $M$ is non negative and decays to $0$ at most exponentially, then
$M$ is parabolic (in fact, that $M$ is $p-$parabolic for any $p>1$, 
see~\cite[Theorem~1.1]{PS}). Also, a theorem of S. T. Yau
\cite{SY} proves that if $M$ has nonnegative Ricci curvature then any
positive \emph{harmonic} function is constant. Since $\mathbb{R}^{3}$ is non
parabolic we cannot weaken the decay condition of Priebe-Soares result to
nonnegative Ricci curvature or, equivalently, replace harmonic by
superharmonic in Yau's result. However, it is not known if this decay condition is sharp.

There are many conditions for proving the parabolicity of a Riemannian
manifold $M,$ closely connected with the behavior of the sectional (or Ricci) 
curvature
$K\ $of $M$. These conditions are difficult to apply when $K$ changes sign on
unbounded domains of $M$ with an uniform variation bounded from below, i.e.,
with $K^{+},\left\vert K^{-}\right\vert \geq k>0,$ where $K^{+}=\max\{K,0\}$
and $K^{-}=\min\{K,0\}$. An interesting class of such manifolds are Lie
groups endowed with left invariant metrics 
(see Theorems 2.4, 2.5 and comments after
Theorem 2.5 in \cite{Milnor1}). 
From what is known, as we may see below,  
the general idea is that such Riemannian manifolds are non-parabolic, but
the fact that there
there are few general conditions to test parabolicity, it 
seems to be necessary an ad hoc study of these manifolds.

In~\cite{I.H}, I. Holopainen proves that the
Heisenberg groups, when endowed with a left invariant metric, are always non parabolic.
Holopainen's paper seems to be one of the few works treating explicitly the
parabolicity problem on an special and well known family of Lie groups with a left
invariant metric (recall that a Lie group endowed with
a left invariant metric is called a {\em metric Lie group}, see~\cite{MP}). 
In the present manuscript, we prove
that the group $\Sol$, the isometry group of the Minkowski plane, is non parabolic with respect to any left invariant metric:

\begin{thm}\label{thmmain}
Let $G$ be a metric Lie group. If $G$ is isomorphic to $\Sol$, then $G$ is non-parabolic.
\end{thm}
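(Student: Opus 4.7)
The plan is to work with the concrete model $\Sol = \rar$, where $A = A_z = \mathrm{diag}(e^{-z}, e^{z})$, in coordinates $(x,y,z)$. The standard left-invariant frame is
\[
E_1 = e^{-z}\partial_x, \qquad E_2 = e^{z}\partial_y, \qquad E_3 = \partial_z,
\]
with brackets $[E_3, E_1] = E_1$, $[E_3, E_2] = -E_2$, $[E_1, E_2] = 0$. Since non-parabolicity is preserved both by Lie group automorphisms (which yield isometries between the corresponding left-invariant Riemannian structures) and by positive rescaling of the metric, it suffices to prove the statement on a family of left-invariant metrics that exhausts every equivalence class.

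The first step is a classification. The identity component of $\mathrm{Aut}(\mathfrak{sol})$ consists of the maps $E_1 \mapsto aE_1$, $E_2 \mapsto dE_2$, $E_3 \mapsto E_3 + \beta E_1 + \gamma E_2$ with $ad \neq 0$. Using $\beta, \gamma$ to make $E_3$ orthogonal to $\mathrm{span}\{E_1, E_2\}$, $a, d$ to normalize $\|E_1\| = \|E_2\| = 1$, a global rescaling to set $\|E_3\| = 1$, and the involution $E_2 \mapsto -E_2$ to fix the sign of $\langle E_1, E_2 \rangle$, every left-invariant metric may be assumed to equal
\[
g_\rho = e^{2z}\,dx^2 + 2\rho\,dx\,dy + e^{-2z}\,dy^2 + dz^2, \qquad \rho \in [0, 1).
\]
Since $\det g_\rho = 1 - \rho^2$ is constant in $(x,y,z)$, a direct computation yields
\[
\Delta_{g_\rho} u = \frac{1}{1 - \rho^2}\bigl(e^{-2z} u_{xx} - 2\rho\, u_{xy} + e^{2z} u_{yy}\bigr) + u_{zz}.
\]

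The second and decisive step is to exhibit, for each such $\rho$, the $y$-independent function
\[
u_\rho(x, y, z) = \bigl((1-\rho^2)\,x^2 + e^{-2z}\bigr)^{-1/2}.
\]
Because $u_\rho$ does not depend on $y$, only the $xx$ and $zz$ terms contribute to $\Delta_{g_\rho} u_\rho$, and a short direct computation gives $\Delta_{g_\rho} u_\rho \equiv 0$ on all of $\Sol$. The function $u_\rho$ is smooth, strictly positive, and manifestly nonconstant (for instance $u_\rho(0, 0, z) = e^{z}$), so it is a positive nonconstant harmonic function on $(\Sol, g_\rho)$, hence in particular a positive nonconstant superharmonic function. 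By definition, this rules out parabolicity.

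The main difficulty I expect is the reduction step: one must verify carefully that the $\mathrm{Aut}(\mathfrak{sol})$-action together with metric rescaling really does cut down the six-dimensional space of positive-definite Gram matrices on $\mathfrak{sol}$ to the one-parameter family $\{g_\rho : \rho \in [0, 1)\}$, so that showing non-parabolicity for each $g_\rho$ covers every left-invariant metric. The closed form of $u_\rho$ itself is motivated by the warped-product description $(\Sol, g_0) = \hh \times_{e^{-z}} \R$ and by seeking solutions depending only on the base factor; once guessed, the verification is routine.
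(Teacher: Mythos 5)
Your proposal is correct, and I checked the two computations that carry the weight: the reduction of an arbitrary left-invariant metric to the family $g_\rho$, and the identity $\Delta_{g_\rho}u_\rho=0$ for $u_\rho=((1-\rho^2)x^2+e^{-2z})^{-1/2}$ (since $\det g_\rho$ is constant and $g_\rho^{ij}$ depends only on $z$ while $g_\rho^{xz}=g_\rho^{yz}=0$, there is indeed no first-order term, and the second-order terms cancel exactly). However, your route is genuinely different from the paper's. For the classification, the paper simply quotes Meeks--P\'erez to put the metric in the form $\rar$ with $A=\Md{1}{a}{0}{-1}$ and the canonical metric, whereas you keep $A$ diagonal and move the one remaining parameter into the inner product $\langle E_1,E_2\rangle=\rho$ at the identity; the two normal forms are related by a linear change of the $(x,y)$ coordinates, and yours has the advantage of making the cross term of the metric constant. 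For the harmonic function, the paper quotients by the Killing field $\partial_x$, obtains the drift equation $\Delta v-\langle\grad v,J\rangle=0$ on $\hn2$ as in \eqref{eqoperator}, and solves it as the product of the (non-explicit) radial first eigenfunction of $\hn2$ with $e^{s/2}$, $s$ a Busemann-type function; you instead write down a single closed-form algebraic function and verify harmonicity directly. In fact the two constructions are solving the same reduced equation: in upper half-plane coordinates $(x,w)$ with $w=e^{-z}$, your $u_0$ is $(x^2+w^2)^{-1/2}$, which is an explicit positive solution of $w^2\Delta_{\rm eucl}v+w\,v_w=0$, i.e.\ of \eqref{eqoperator} after relabeling coordinates. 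Your argument is therefore more elementary and self-contained (no eigenfunction theory, no submersion machinery), while the paper's is more conceptual and transfers to other quotient constructions. Two cosmetic points you should fix in a write-up: perform the global rescaling \emph{before} unit-normalizing $E_1,E_2$ (or iterate once), since rescaling after normalization destroys $\|E_1\|=\|E_2\|=1$; and with $E_1=e^{-z}\partial_x$ the bracket is $[E_3,E_1]=-E_1$, not $+E_1$ (harmless, but worth correcting). You should also display the short computation of $\Delta_{g_\rho}u_\rho$ rather than only asserting it.
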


When endowed with a special left-invariant metric (namely the metric which contains the largest number of symmetries), the group $\Sol$
is one of the eight {\em Thurston's geometries}:
$$\mathbb{R}^3,\,\mathbb{H}^3,\,\mathbb{S}^3,\,
\mathbb{H}^2\times\mathbb{R},\,\mathbb{S}^2\times\R,\,
\widetilde{SL}(2,\mathbb{R}),\,{\rm Nil}_3,\,\Sol.$$
Here, $\mathbb{R}^n,\,\mathbb{S}^n$ and $\mathbb{H}^n$ are the space forms
of dimension $n$, $\widetilde{SL}(2,\mathbb{R}),$ is the
universal covering of the special linear group of $2\times2$ matrices, and
${\rm Nil}_3$ is the
$3$-dimensional Heisenberg group. With the unique exception of
$\mathbb{S}^2\times\R$ (which is parabolic), all Thurston's geometries are
metric Lie groups. The Heisenberg group is non parabolic by Holopainen work;
$\mathbb{R}^{3},$ $\mathbb{H}^{3},$ $\mathbb{H}^{2}\times\mathbb{R}$ are well
known to be non parabolic, and Theorem~\ref{thmmain} places $\Sol$ as 
yet another
non parabolic Thurston geometry defined by a metric Lie group. 
The parabolicity (or non parabolicity) of the remaining case,
$\widetilde{SL}(2,\mathbb{R)},$ seems to be unknown.

The proof of Theorem \ref{thmmain} uses an elementary direct approach,
carried out in Section~\ref{secproofs}: it consists in
constructing an explicit example of a positive, entire, nonconstant
harmonic function in $\Sol$, by choosing a special one parameter
subgroup $\G$ of isometries of $\Sol$ and studying a certain 
elliptic partial differential equation in the
quotient space $\Sol/\G$ that comes from the Laplacian operator of
$\Sol$. We observe  that Theorem 1, proving the existence of a positive harmonic function in $\Sol$, proves indeed that $\Sol$ does not satisfy the Liouville property, according to J. Kazdan ~\cite{JK}.

\section{The semidirect product representation of $\Sol$.}

Next, we introduce some definitions and state some facts that 
will be used in the 
proof of Theorem~\ref{thmmain}. First, let $A\in M_2(\R)$ be a $2\times 2$ real matrix and let, for
each $z\in \R$, $e^{Az}$ 
be its exponential map, acting on $\R^2$ via left multiplication.
The semidirect product
$\rar$ is the Lie group $(\R^3,*)$, where $*$ is the operation defined by
$$({\bf p}_1,z_1)*({\bf p}_2,z_2) = ({\bf p}_1+e^{Az_1}{\bf p}_2,
z_1+z_2).$$ 
If we denote
\begin{equation}\label{eAz}
e^{Az} = \Md{a_{11}(z)}{a_{12}(z)}{a_{21}(z)}{a_{22}(z)},
\end{equation}
the group operation of $\rar$ in coordinates can be expressed as
$$(x_1,y_1,z_1)*(x_2,y_2,z_2) = (x_1+a_{11}(z_1)x_2+a_{12}(z_1)y_2,
y_1+a_{21}(z_1)x_2+a_{22}(z_1)y_2, z_1+z_2).$$
In order to regard $\rar$ as a metric Lie group,
we next present the
{\em canonical left-invariant metric} of $\rar$
(see Meeks-Pérez~\cite[Section~2.3]{MP} for more details).
\begin{definition}
The {\em canonical left-invariant metric} on $\rar$ is such that
$\{\partial_x,\,\partial_y,\,\partial_z\}$ is an orthonormal 
basis at the origin (0,0,0). In coordinates, the vector fields
$$E_1 = a_{11}(z) \partial_x + a_{21}(z)\partial_y,\quad
E_2 = a_{12}(z) \partial_x + a_{22}(z)\partial_y,\quad
E_3 = \partial_z$$
form an orthogonal frame of left invariant vector fields extending
$\{\partial_x,\partial_y,\partial_z\}$ at the origin.
\end{definition}
Using this notation, it is possible to obtain a one-parameter family of
metric semidirect products $\rar$ which, up to rescaling, 
will serve as models for the
group $\Sol$ endowed with any left-invariant metric.

\begin{proposition}\label{propSol} 
Let $\Sol$ be the group of isometries of the
Minkowski plane. Then, if $g$ is any left-invariant metric on $\Sol$, there
exists $a\geq0$ such that the metric Lie group $(\Sol,g)$ is, 
up to homothety, isomorphic and isometric to $\rar$ (endowed with its 
canonical left-invariant metric), where
\begin{equation}\label{Asol}
A = \Md{1}{a}{0}{-1}.
\end{equation}
\end{proposition}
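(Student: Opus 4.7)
The plan is to reduce everything to the Lie algebra $\mathfrak{sol}$ and exploit that simply connected Lie groups are determined by their Lie algebras. Recall that $\mathfrak{sol}$ is (up to isomorphism) the unique $3$-dimensional Lie algebra possessing a $2$-dimensional abelian ideal $V$ on which an outer generator $e_3$ acts with two real eigenvalues of opposite sign and equal absolute value; after rescaling $e_3$ one may assume $\mathrm{ad}_{e_3}|_V$ has eigenvalues $\pm 1$. Crucially, $V$ is intrinsic, being the unique $2$-dimensional abelian ideal of $\mathfrak{sol}$.

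Now fix a left-invariant metric $g$ on $\Sol$, equivalently an inner product on $\mathfrak{sol}$. I would decompose $\mathfrak{sol} = V \oplus V^\perp$ orthogonally, and pick a unit vector $\ol E_3 \in V^\perp$. Writing $\ol E_3 = \alpha e_3 + v$ with $v \in V$ and using that $V$ is abelian, one gets $\mathrm{ad}_{\ol E_3}|_V = \alpha\,\mathrm{ad}_{e_3}|_V$, an operator with eigenvalues $\pm \alpha$. Replacing $g$ by the homothetic metric $\alpha^2 g$ if necessary, one may assume that the unit vector $E_3 \in V^\perp$ satisfies that $\mathrm{ad}_{E_3}|_V$ has eigenvalues $\pm 1$.

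Next, choose a unit eigenvector $E_1 \in V$ of $\mathrm{ad}_{E_3}|_V$ for the eigenvalue $+1$, and let $E_2 \in V$ be a unit vector orthogonal to $E_1$. The matrix of $\mathrm{ad}_{E_3}|_V$ in the orthonormal basis $\{E_1,E_2\}$ takes the form $\Md{1}{a}{0}{b}$; the trace-zero condition forces $b=-1$, and flipping the sign of $E_2$ if needed yields $a\geq 0$. Thus $\{E_1,E_2,E_3\}$ is an orthonormal basis of $\mathfrak{sol}$ satisfying the bracket relations $[E_3,E_1]=E_1$, $[E_3,E_2]=aE_1-E_2$, and $[E_1,E_2]=0$.

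To conclude, a direct computation using~\eqref{eAz} together with $(e^{Az})'=A e^{Az}$ shows that the left-invariant frame of $\rar$ attached to $A=\Md{1}{a}{0}{-1}$ obeys $[E_3,E_i]=A_{1i}E_1+A_{2i}E_2$ and $[E_1,E_2]=0$, exactly matching the relations obtained above. The linear map sending each $E_i\in\mathfrak{sol}$ to its namesake in $\mathrm{Lie}(\rar)$ is therefore a Lie algebra isomorphism; since both groups are simply connected it integrates to a Lie group isomorphism, and since it identifies orthonormal frames of two left-invariant metrics at the origin, it is automatically an isometry. The most delicate point is the linear-algebra normal form: one must verify that an arbitrary inner product on $V$ can always be aligned with the (generally non-orthogonal) eigenbasis of $\mathrm{ad}_{E_3}|_V$ so as to produce precisely the upper-triangular matrix with diagonal $(1,-1)$ and nonnegative super-diagonal entry.
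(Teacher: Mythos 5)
Your argument is correct, but it takes a genuinely different route from the paper. The paper does not reprove the normal form at all: its proof of Proposition~\ref{propSol} consists in citing Section~2.7 of Meeks--P\'erez~\cite{MP}, where $(\Sol,g)$ is shown to be isomorphic and isometric to $\R^2\rtimes_B\R$ with $B=\Md{0}{c_1}{1/c_1}{0}$, $c_1\geq 1$, and then passing from $B$ to the matrix $A$ of~\eqref{Asol} by the observation that the two matrices are congruent precisely when $a=(c_1^2-1)/c_1$, which makes the two semidirect products isomorphic and isometric. You instead work directly on the Lie algebra: you identify $V$ (the derived algebra) as the unique two-dimensional abelian ideal, split $\mathfrak{sol}=V\oplus V^{\perp}$ with respect to the given inner product, rescale the metric homothetically so that the unit vector $E_3\in V^\perp$ acts on $V$ with eigenvalues $\pm1$, and choose an orthonormal basis of $V$ whose first vector is a unit $(+1)$-eigenvector; upper-triangularity is then automatic, the trace-zero condition gives the $-1$ on the diagonal, and a sign flip of $E_2$ gives $a\geq 0$. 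Matching brackets with the canonical frame of $\rar$ (your formula $[E_3,E_i]=A_{1i}E_1+A_{2i}E_2$, $[E_1,E_2]=0$ is indeed what $\frac{d}{dz}e^{Az}=Ae^{Az}=e^{Az}A$ yields) and integrating the resulting isometric Lie algebra isomorphism finishes the proof. What your route buys is self-containedness: you never invoke the Meeks--P\'erez classification nor the fact that congruent matrices produce isomorphic and isometric semidirect products; what the paper's route buys is brevity, since all the work is delegated to the reference. Two small remarks: the ``delicate point'' you flag in your last sentence is already fully settled by your own construction (eigenvector for $+1$, orthogonal unit complement, trace zero), so nothing is left open there; and your integration step uses that $\Sol$ means the connected, simply connected group (the identity component of the isometry group of the Minkowski plane), which is the convention the paper and~\cite{MP} also adopt.
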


\begin{remark}
The proof of Proposition~\ref{propSol} is carried
out in Section~2.7 of~\cite{MP} with a minor distinction. 
There, the authors prove that, when
endowed with a left-invariant metric,
$\Sol$ is isomorphic and isometric to a semidirect product
$\R^2\rtimes_{B}\R$ where 
$$B = \Md{0}{c_1}{1/c_1}{0},$$
for some $c_1\geq1$. The stated representation with the matrix
$A$ of~\eqref{Asol} follows after observing that $A$ and $B$ as above are
congruent (which makes $\rar$ and $\mathbb{R}^2\rtimes_B\R$ both isomorphic 
and isometric) when $a = (c_1^2-1)/c_1$.
\end{remark}

Henceforward, $\Sol$ will denote the metric Lie group modelled by $\rar$,
where $a\geq0$ and $A$ is given by~\eqref{Asol}.

\section{The proof of Theorem~\ref{thmmain}.}\label{secproofs}

In this section, we show that $\Sol$ is non-parabolic by exhibiting
an explicit example of an entire, positive, nonconstant
harmonic function. 

First, we prove Theorem~\ref{thmmain} in the case when $a = 0$, that is, when
$\Sol$ is modelled by $\rar$ where $A$ is
\begin{equation}\label{1-1}
A=\Md{1}{0}{0}{-1}.
\end{equation} 
We note that this is the mostly well known model for $\Sol$, which
makes it one of the eight Thurston's geometries. Explicitly, this model
is $(\R^3,ds^2)$, where
\begin{equation*}
ds^2  =  e^{-2z}dx^2+e^{2z}dy^2+dz^2.
\end{equation*}
We note that the frame $\{E_1 = e^z\partial_x,\, E_2 = e^{-z}\partial_y,\,
E_3 = \partial_z\}$ is composed by left invariant vector
fields which are unitary and everywhere orthogonal.

Our next construction is to produce an entire, positive, nonconstant harmonic function $u\colon \Sol\to \R$, which will not depend on
the $x$ variable. Recall that if $u$ is smooth, being harmonic
is equivalent to $\Delta u = 0$, where 
$\Delta$ denotes the Laplacian operator on $\Sol$, which can be 
written in coordinates as
\begin{equation}\label{lapla}
\Delta u= e^{2z}u_{xx}+e^{-2z}u_{yy}+u_{zz}.
\end{equation}
We find a function as described above after 
constructing a Riemannian submersion
$P\colon \Sol\to \hn2$ and defining
$u = w\circ P$, where $w\colon \hn2\to \R$ is a suitable function. 

Consider the right-invariant (and thus Killing) vector field $\partial_x$,
whose flux acts on $\Sol$ via the 1-parameter group of isometries 
$$\Gamma = \{(x,y,z)\mapsto (x+t,y,z)\}_{t\in\R}.$$
Let $M = \{(0,y,z)\in \rar\mid y,z\in\R\}$, endowed with the induced
ambient metric. The next claim presents two key properties
for our construction.

\begin{claim}\label{newlemma}
$M$ is isometric to the hyperbolic plane $\hn2$ and
the map $\pi\colon \Sol \to M$ defined by
$\pi(x,y,z) = (0,y,z)$ is a Riemannian submersion. 
\end{claim}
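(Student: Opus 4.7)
The plan is to verify both assertions by direct computation in coordinates, exploiting the fact that (in the case $a=0$ under consideration) the ambient metric $ds^2 = e^{-2z}dx^2 + e^{2z}dy^2 + dz^2$ is diagonal with coefficients depending only on $z$.

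First, to identify $M$ with $\hn2$, I would restrict the ambient metric to the slice $\{x=0\}$, obtaining the induced metric $e^{2z}dy^2 + dz^2$ in the $(y,z)$ coordinates on $M$. To recognize this as the hyperbolic metric, I would introduce the change of variables $v = e^{-z}$, under which $dz^2 = dv^2/v^2$ and $e^{2z} = 1/v^2$; the induced metric then takes the form $(dy^2 + dv^2)/v^2$, which is precisely the upper-half-plane model of $\hn2$ on $\{(y,v)\in\R^2 : v>0\}$. Since $z\mapsto e^{-z}$ is a diffeomorphism from $\R$ onto $(0,\infty)$, this furnishes a global isometry from $M$ onto $\hn2$.

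For the Riemannian submersion claim, I would first identify the vertical distribution: at every point, $\ker d\pi = \mathrm{span}(\partial_x)$. Because the ambient metric is diagonal, $\partial_x$ is orthogonal to both $\partial_y$ and $\partial_z$ everywhere, so the horizontal distribution at $(x,y,z)\in\Sol$ equals $\mathrm{span}(\partial_y,\partial_z)$. Under $d\pi$, the horizontal frame $\{\partial_y,\partial_z\}$ at $(x,y,z)$ maps to $\{\partial_y,\partial_z\}$ at $(0,y,z)\in M$; since the metric coefficients $e^{2z}$ and $1$ depend only on $z$ (which is preserved by $\pi$), the restriction of $d\pi$ to horizontal vectors preserves inner products, as required.

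I do not anticipate a serious obstacle: both statements reduce to elementary manipulations once one writes down the induced metric and observes that $\pi$ preserves the $z$-coordinate. The only minor subtlety is choosing the correct substitution ($v = e^{-z}$ rather than $v = e^{z}$) so that the conformal factor $1/v^2$ emerges with the appropriate sign in the exponent; taking the wrong choice would yield a metric conformal to, but not identical with, the standard model, and would require an additional rewriting.
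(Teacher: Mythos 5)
Your proposal is correct and follows essentially the same route as the paper: the substitution $v=e^{-z}$ is exactly the paper's isometry $(0,y,z)\mapsto(y,e^{-z})$ onto the half-plane model, and the submersion argument (vertical direction $\partial_x$, horizontal span $\{\partial_y,\partial_z\}$, metric coefficients depending only on the preserved coordinate $z$) matches the paper's verification.
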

\begin{proof}
To see that $M$ is isometric to $\hn2$, just note that the ambient metric
restricted to $M$ is simply $e^{2z}dy^2+dz^2$ and
the map $$(0,y,z)\in M\mapsto (y,e^{-z})\in \R^2_+ = \{(x,y)\in \R^2\mid y>0\}$$
is an isometry between $M$
and the half-space model for $\hn2$, $$(\R^2_+,\frac{dx^2+dy^2}{y^2}).$$
The fact that $\pi$ is a Riemannian submersion follows from observing that
the fibers are horizontal lines $\{(t,y_0,z_0)\mid t\in \R\}$, 
so ${\rm ker}(d\pi)$ is generated by $\partial_x$ and
${\rm ker}(d\pi)^\perp$ is generated by $\{\partial_y,\partial_z\}$.
The fact that $\pi$ leaves the third coordinate unchanged then makes
the restriction of $d\pi$ to ${\rm ker}(d\pi)^\perp$ an isometry.
\end{proof}

If $v\colon \hn2 \to \R$ is a smooth function, we let
$\wt{v} = v\circ P\colon \Sol \to \R$ denote the lift of $v$ to $\Sol$ by $P$. 
For a given $p\in\Sol$, let $\Gamma(p)$ denote its orbit with respect
to the action of $\G$ and
let $\overrightarrow{H}$ denote the mean curvature vector of $\G(p)$,
which is always orthogonal to the fibers of $\pi$. 
Since $\overrightarrow{H}$ is $\G$-invariant, its projection
$d\pi \overrightarrow{H}$ defines a vector field in $M$,
which we denote by $J$, so
$J = dP\overrightarrow{H}$ 
is a vector field in $\hn2$. Under these conditions, the
same proof of~\cite[Proposition~3]{RT} applies to
find that $\Delta\wt{v}= 0$
if and only if $v$ satisfies
\begin{equation}\label{eqimportant}
\Delta v - \forma{\grad(v),J}= 0,
\end{equation}
where $\Delta$ and $\grad$
respectively denote the Laplacian and the gradient 
in $\hn2$ and $\forma{\,,\,}$ is the hyperbolic metric.
Next, we find an explicit expression
for $J$ in coordinates, and we refer to~\cite[Equation~(2.11)]{MP} for
the Riemannian connection of the semidirect product model of $\Sol$.

\begin{claim}
For any $p = (0,y,z)\in M$, $J = \partial_z$. In particular, 
$J$ is the unitary tangent field to a family of geodesics of
the hyperbolic metric on $M$, all issuing from 
the same point $p^*$ at infinity (as in Figure~\ref{figOrbits}).
\end{claim}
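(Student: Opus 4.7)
My plan is to compute $\overrightarrow{H}$ directly in $\Sol$-coordinates, project it under $d\pi$, and then transport the resulting vector field to $\hn{2}$ via the isometry of Claim~\ref{newlemma}.

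First, I would describe the orbit and normalize its tangent. Through $p=(0,y,z)$, the orbit $\G(p)$ is the coordinate line $\{(t,y,z) : t\in\R\}$, tangent to $\partial_x$. Since $|\partial_x|^2=e^{-2z}$ along the orbit, the unit tangent is $T=e^{z}\partial_x$; for the $1$-dimensional submanifold $\G(p)$, the mean curvature vector is the normal component $\overrightarrow{H}=(\n_T T)^\perp$. Because $e^z$ is constant along the orbit, $\n_T T = e^{2z}\,\n_{\partial_x}\partial_x$, and either equation (2.11) of~\cite{MP} or a short Christoffel computation for the metric $e^{-2z}dx^2+e^{2z}dy^2+dz^2$ gives $\n_{\partial_x}\partial_x = e^{-2z}\partial_z$. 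Hence $\n_T T = \partial_z$, which is already orthogonal to $\partial_x$, so $\overrightarrow{H}=\partial_z$. Since $d\pi$ leaves the $y$- and $z$-coordinates unchanged, $J = d\pi\,\overrightarrow{H} = \partial_z$ on $M$, proving the first assertion.

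Finally, for the geometric statement, I would transport $J$ to the half-plane model via the isometry $(0,y,z)\mapsto(y,e^{-z})$ used in the proof of Claim~\ref{newlemma}. Under this map, $\partial_z$ pushes forward to $-W\partial_W$, where $W$ is the vertical coordinate of $\R^2_+$. This vector field has unit hyperbolic norm, and its integral curves are the vertical half-lines, which are precisely the hyperbolic geodesics sharing the common boundary point at infinity $\infty\in\partial\hn{2}$; this identifies $p^*$ as the point $\infty$. I do not anticipate any substantive obstacle: the only point requiring care is to distinguish $\partial_x$ from its unit normalization before applying $\n$, so that the factor $e^{2z}$ from renormalization cancels the $e^{-2z}$ in $\n_{\partial_x}\partial_x$ to produce the clean expression $J=\partial_z$.
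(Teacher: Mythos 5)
Your proposal is correct and follows essentially the same route as the paper: you compute the mean curvature vector of the orbit as $(\n_T T)^\perp$ with the unit tangent $T=e^{z}\partial_x=E_1$ (the paper quotes the connection formula of~\cite{MP}, you offer the equivalent Christoffel computation), obtain $\overrightarrow{H}=\partial_z$, and project by $d\pi$ to get $J=\partial_z$. Your explicit transport to the half-plane model to identify the integral curves as geodesics issuing from the common ideal point $p^*$ is just a slightly more detailed version of the paper's appeal to the isometry from Claim~\ref{newlemma}.
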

\begin{figure}
\centering
\includegraphics[width=0.6\textwidth]{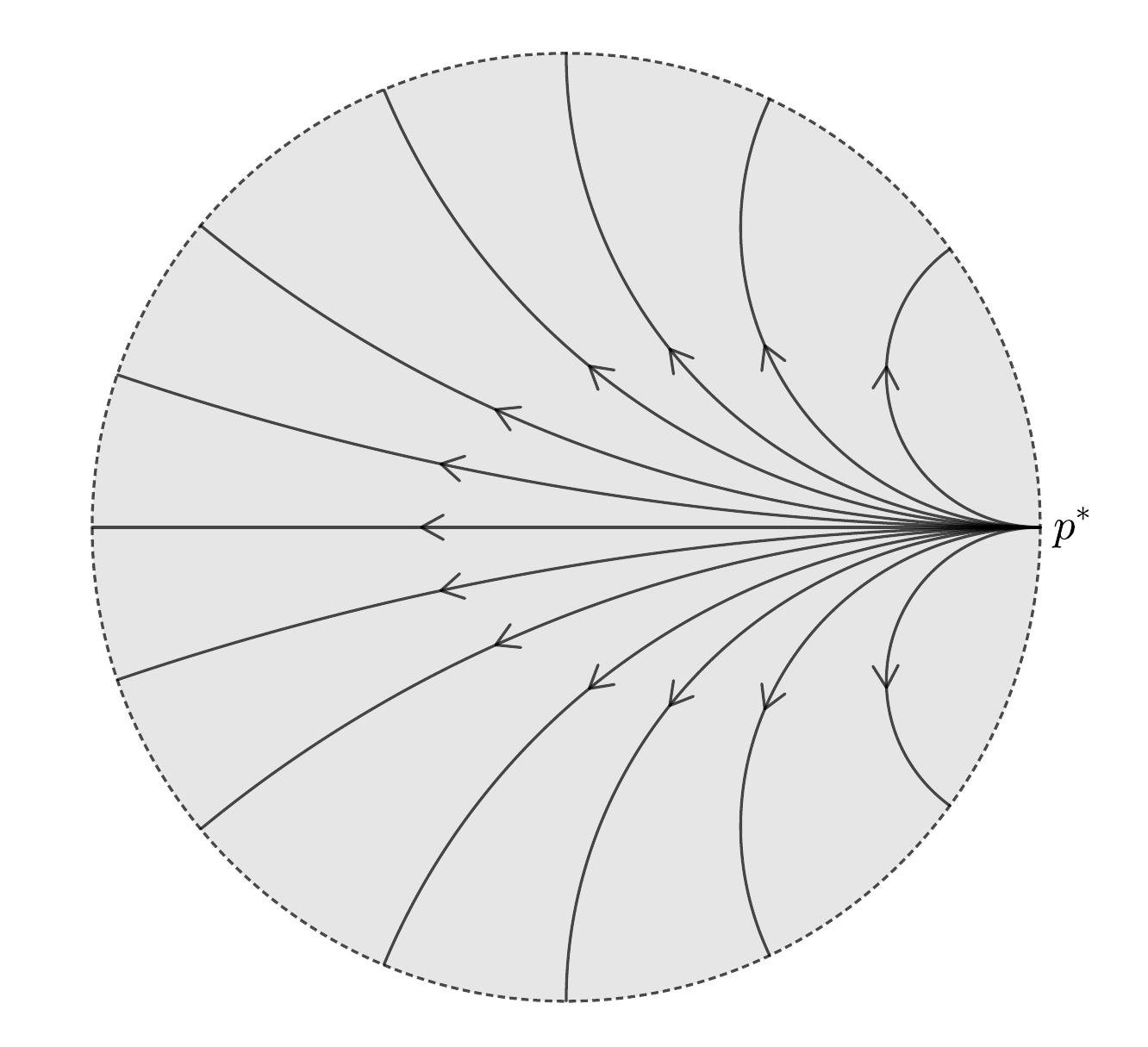}
\caption{\label{figOrbits}Depicted in the disk model for $\hn2$, 
the orbits of $J$ are geodesics issuing
from the point at infinity $p^*$. }
\end{figure}
\begin{proof}
First, note that $E_1 = e^z\partial_x$ 
is a unitary vector
field tangent to the orbits $\Gamma(p)$, which are all orthogonal to 
the vertical planes $\mathcal{P}_c = \{(c,y,z)\in \rar \mid y,z\in\R\}$.
Therefore,
if $(\cdot)^\bot$ denotes the orthogonal projection onto $T\mathcal{P}_c$,
$$
\overrightarrow{H} =
\left(
\nabla_{E_1}E_1
\right)^{\bot}
=\left(E_3\right)^{\bot} = E_3 = \partial_z.
$$
To finish the proof of the claim, just note that
$d\pi \partial_z = \partial_z|_{\hn2}$, so the orbits of the flux of $J$
are vertical lines $\{(0,y_0,t)\mid t\in \R\}$, 
which are geodesics as described before.
\end{proof}

For a function $u\colon \hn2\to \R$, let 
$L(u) =\Delta u -  \forma{\grad u, J}$.
The next step in the proof of Theorem~\ref{thmmain} is to find a smooth solution $w\in C^0(\mathbb{H}^2)$ to the linear
partial differential equation
\begin{equation}\label{eqoperator}
L(u) =0.
\end{equation}

Notice that $J$ is orthogonal to the horocycles having $p^*$ as point 
at infinty. More precisely, chosing any horocycle $\mathcal{H}$ of this 
family and denoting by $s$ the signed distance function to $\mathcal{H}$ 
(pointing towards the concave side defined by $\mathcal{H}$ in $\hn2$),
then,
$$J=\nabla s.$$
Furthermore, it is well-known that $\Delta s \equiv 1$ in $\hh$.

Write $w:\hh\to\mathbb{R}$ 
$$w(p)=e^{s(p)/2}$$ 
and observe that 
$$\nabla w = \frac{1}{2} w \nabla s.$$
Since $\abs{\nabla s} = 1 = \Delta s$,
\begin{equation}\label{laplaw}
\Delta w = \frac{1}{4} w\vert\nabla s\vert^2+ \frac{1}{2}w \Delta s 
= \frac{3}{4} w.
\end{equation}

To finish the construction, choose any point $o\in \hh$ and let 
$r\colon\hh \to \mathbb{R}$ be the distance function to $o$, i.e.,
$r(p)=d(p,o)$ in $\hh$. Let $v\colon \hh \to \mathbb{R}$ be the radial 
eigenfunction of $\hh$ related to its first eigenvalue $\lambda_1=1/4$
satisfying $v(o)=1$. More specifically, $v$ satisfies
\begin{equation}\label{laplav}
\Delta v+\frac{1}{4} v = 0.
\end{equation}
It is well known that 
$v$ is a non-constant, positive function.
Finally, define $u\colon \hh \to \mathbb{R}$ by $u=vw$.
Therefore, by~\eqref{laplaw} and~\eqref{laplav},
\begin{eqnarray}
\Delta u &=& v\Delta w + w\Delta v + 2\langle \nabla v, \nabla w \rangle \nonumber\\&=& \frac{3}{4} vw - \frac{1}{4} vw + w \langle \nabla v, \nabla s\rangle\nonumber \\ &=& \frac{1}{2} vw + wJ(v).\label{Deltau}
\end{eqnarray}

On the other hand, since $J(w) = \frac12 w$,
\begin{equation}\label{Ju}
J(u) = vJ(w) + wJ(v) = \frac{1}{2} vw + wJ(v).
\end{equation}

Combining equations \eqref{Deltau} and \eqref{Ju}
 we obtain that \[\Delta u - J(u) = 0,\] therefore $u$ is a positive non-constant solution  to~\eqref{eqoperator}. Thus,
as already explained, this implies that 
the function $\wt{u}\colon \Sol \to \R$ defined as
\begin{equation}\label{defW}
\wt{u}(x,y,z)=u(\pi(x,y,z))= u(0,y,z)
\end{equation}
is a positive, nonconstant harmonic 
function in $\Sol$, proving that 
$\Sol$ is non parabolic when endowed with the canonical metric defined
by the matrix $A$ as in~\eqref{1-1}.

To prove Theorem~\ref{thmmain} in the general case, fix $a>0$.
Let
$$A = \Md{1}{a}{0}{-1},$$
and consider $\Sol$ as the semidirect product $\rar$ endowed with its 
canonical left-invariant metric.
In this case, 
$$e^{Az}=\Md{e^z}{p(z)}{0}{e^{-z}},$$ where $p(z)$ is a polynomial on $z$.
The metric on $\Sol$ now is given by
$$ds^2 = e^{-2z}dx^2+\left((p(-z))^2+e^{2z}\right)dy^2+dz^2
+p(-z)e^zdxdy,$$
and the Laplacian operator of $\Sol$ is
\begin{equation}\label{laplSola}
\Delta u= (e^{2z}+p(z)^2)u_{xx}+2p(z)e^{-z}u_{xy}+e^{-2z}u_{yy}+u_{zz}.
\end{equation}
Since the function $\wt{u}$ defined in~\eqref{defW} 
does not depend on the variable $x$, it also satisfies (\ref{laplSola}), which
concludes the proof of the theorem.

\center{Instituto de Matemática e Estatística, Universidade Federal do Rio Grande do Sul, Brazil.\\
Email address:  bonorino@mat.ufrgs.br}
\center{
Instituto de Física e Matemática, Universidade Federal de Pelotas,
Brazil.\\
Email address: giovanni.nunes@ufpel.edu.br}

\center{Instituto de Matemática e Estatística, Universidade Federal do Rio Grande do Sul, Brazil.\\
Email address:  alvaro.ramos@ufrgs.br}

\center{Instituto de Matemática e Estatística, Universidade Federal do Rio Grande do Sul,
 Brazil.\\
 Email address:  jaime.ripoll@ufrgs.br}

\center{Instituto de Física e Matemática, Universidade Federal de Pelotas,
 Brazil.\\
 Email address: lisandra.sauer@ufpel.edu.br}

\center{Instituto de Matemática e Estatística, Universidade Federal do Rio Grande do Sul, Brazil.\\
Email address:  miriam.telichevesky@ufrgs.br}

\end{document}